\newtheorem{theorem}{Theorem}
\newtheorem{lemma}{Lemma}
\newtheorem{corollary}[theorem]{Corollary}
\newcommand{\fat}[1]{\ifmmode\bm{#1}\else\textbf{#1}\fi}
\newcommand{\set}[1]{\mathbb{#1}}
\newcommand{\vect}[1]{\fat{#1}}
\newcommand{\matr}[1]{#1}
\newcommand{\tens}[1]{\mathcal{#1}}
\newcommand{\func}[1]{\textsf{#1}}
\newcommand{\order}[1]{\mathcal{O}\left( #1 \right)}
\newcommand{\mg}[0]{\matr{G}}            
\newcommand{\mq}[0]{\matr{Q}}            
\newcommand{\td}[0]{\tens{D}}            
\newcommand{\tg}[0]{\tens{G}}            
\newcommand{\ty}[0]{\tens{Y}}            
\newcommand{\tp}[0]{\tens{P}}            
\newcommand{\tz}[0]{\tens{Z}}            
\DeclareMathOperator*{\argmax}{arg\,max}
\newcommand\normb[1]{\ensuremath{\bigl\lVert#1\bigr\rVert}}
\def\textfor{\text{ for }\,}
\def\indft#1_#2,#3..#4{#1_{#2},\,#1_{#3},\,\ldots,\,#1_{#4}}  
\def\indftsmall#1_#2,#3..#4{#1_{#2},\,\ldots,\,#1_{#4}}  
\def\indt#1_#2{#1_1,\,\ldots,\,#1_{#2}}  
\def\rangeo#1{\range1,2..{#1}}
\def\range#1,#2..#3{#1,\,#2,\,\ldots,\,#3}
\def\alld{\,:\,}
\def\plusl#1#2{#1_{\oplus#2}}
\def\sumnN#1_#2^#3{\ensuremath{\sum_{%
{#1}_{#2}=1%
}^{%
{#3}_{#2}%
}%
}}
\def\sumnNseq#1_#2,#3..#4^#5{\sumnN#1_#2^{#5}\sumnN#1_#3^{#5}\cdots\sumnN#1_#4^{#5}%
}
\def\sumnNseqSmall#1_#2,#3..#4^#5{\sumnN#1_#2^{#5}\cdots\sumnN#1_#4^{#5}%
}
\def\abs|#1|{\ensuremath{\left|#1\right|}}
\begin{document}

\sptitle{\hbox{}} 
\editor{\hbox{}}


\title{Optimization of Functions Given in the Tensor Train Format}

\author{A. Chertkov}
\affil{Skolkovo Institute of Science and Technology}

\author{G. Ryzhakov}
\affil{Skolkovo Institute of Science and Technology}

\author{G. Novikov}
\affil{Skolkovo Institute of Science and Technology}

\author{I. Oseledets}
\affil{Skolkovo Institute of Science and Technology, 
}


\begin{abstract} 
Tensor train (TT) format is a common approach for computationally efficient work with multidimensional arrays, vectors, matrices, and discretized functions in a wide range of applications, including computational mathematics and machine learning.
In this work, we propose a new algorithm for TT-tensor optimization,
which leads to very accurate approximations for the minimum and maximum tensor element.
The method consists in sequential tensor multiplications of the TT-cores with an intelligent selection of candidates for the optimum.
We propose the probabilistic interpretation of the method, and make estimates on its complexity and convergence.
We perform extensive numerical experiments with random tensors and various multivariable benchmark functions with the number of input dimensions up to $100$.
Our approach generates a solution close to the exact optimum for all model problems, while the running time is no more than $50$ seconds on a regular laptop.
\end{abstract}

\maketitle

\chapterinitial{Tensor train (TT) format}~\cite{oseledets2011tensor} is a powerful paradigm for multidimensional arrays (tensors).
An arbitrary tensor can be transformed into a TT-decomposition, which is a compact (low-rank) parametric representation.
The TT-decomposition can be constructed by robust existing algorithms from an explicit tensor (that is, a complete array stored in the memory of a computing device), implicit tensor (i.\,e., a tensor given as some computational procedure for calculating any its element), or even random training dataset.
TT-format has been successfully applied in a wide range of applications~\cite{cichocki2016tensor}, including compression and acceleration of deep neural networks, image and video processing, solution of differential equations, etc.
However, to date, there is no stable approach for TT-tensor optimization, and in this work we propose a new algorithm \func{optima\_tt}, which leads to very accurate approximations for the minimum and maximum value of the given TT-tensor.
To summarize, our main contributions are the following:
\begin{itemize}
    \item We develop the new method \func{optima\_tt} for optimization of the TT-tensors based on the sequential multiplications of the TT-cores with an intelligent selection of candidates for the optimum.
    \item We establish a connection of the \func{optima\_tt} with the probabilistic approach, and we prove estimates for the complexity and convergence of the method.
    \item We implement\footnote{
        The proposed approach is implemented within the software product \func{teneva}, which is available from \url{https://github.com/AndreiChertkov/teneva}.
    } the proposed algorithms
    as a part of a publicly available python package.
    \item We check\footnote{
        The program code with numerical examples, given in this work, is publicly available in the repository \url{https://github.com/AndreiChertkov/teneva_research_optima_tt}.
    } the efficiency and stability of the \func{optima\_tt} on a wide range of model problems, including multivariable benchmarks and random TT-tensors.
\end{itemize}

\section{BACKGROUND}
\label{s:tt}

\begin{figure*}[t!]
    \centering
    \includegraphics[scale=0.5]{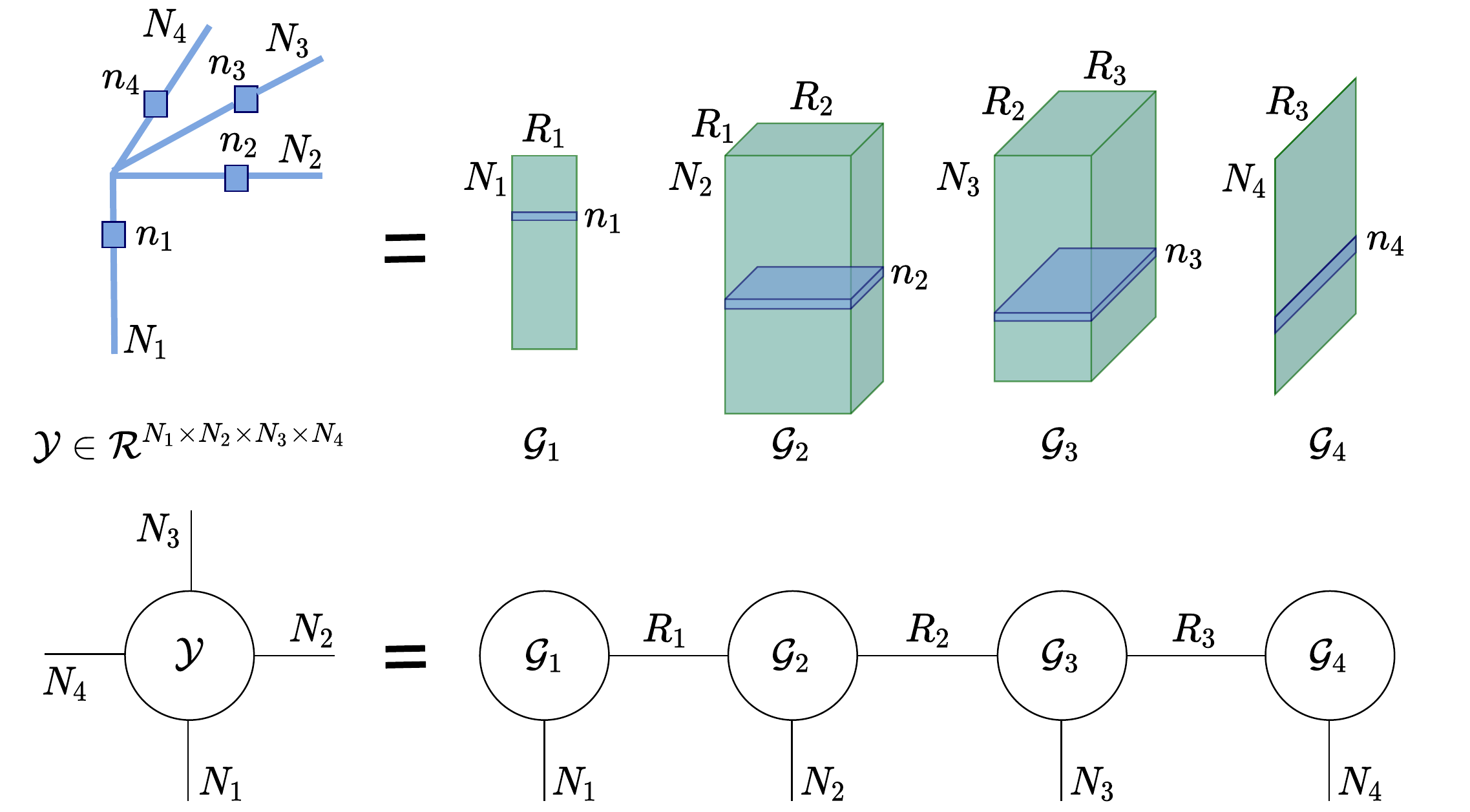}
    \vspace{0.5cm}
    \caption{
        Schematic representation of the TT-decomposition for the $4$-dimensional tensor $\ty \in \set{R}^{N_1 \times N_2 \times N_3 \times N_4}$.
        The top picture shows an illustration for the calculation of the specific tensor element $(n_1, n_2, \ldots, n_d)$ from its TT-representation according to the formula \eqref{eq:tt-repr-tns}.
        The bottom picture presents the related tensor network diagram
    }
    \label{fig:tt-element}
\end{figure*}

A tensor\footnote{
    A tensor is just a multidimensional array with a number of dimensions $d$ ($d \geq 1$).
    A two-dimensional tensor ($d = 2$) is a matrix, and when $d = 1$ it is a vector.
    For scalars we use normal font ($a, b, c, \ldots$), we denote vectors with bold letters ($\vect{a}, \vect{b}, \vect{c}, \ldots$), we use upper case letters ($\matr{A}, \matr{B}, \matr{C}, \ldots$) for matrices, and calligraphic upper case letters ($\tens{A}, \tens{B}, \tens{C}, \ldots$) for tensors with $d > 2$.
    The $(n_1, n_2, \ldots, n_d)$th entry of a $d$-dimensional tensor $\tens{A} \in \set{R}^{N_1 \times N_2 \times \ldots \times N_d}$ is denoted by $\tens{A}[n_1, n_2, \ldots, n_d]$, where $n_k = 1, 2, \ldots, N_k$ ($k = 1, 2, \ldots, d$), and $N_k$ is a size of the $k$-th mode.
    The mode-$k$ slice of such tensor is denoted by $\tens{A}[n_1, \ldots, n_{k-1}, :, n_{k+1}, \ldots, n_d]$, and it is a vector of the length $N_k$.
} $\ty \in \set{R}^{N_1 \times N_2 \times \ldots \times N_d}$ is said to be in the TT-format~\cite{oseledets2011tensor}, if its elements are represented by the following formula
\begin{multline}\label{eq:tt-repr-tns}
\ty [n_1, n_2, \ldots, n_d]
=
\sum_{r_1=1}^{R_1}
\sum_{r_2=1}^{R_2}
\cdots
\sum_{r_{d-1}=1}^{R_{d-1}}
    \\
    \tg_1 [1, n_1, r_1]
    \;
    \tg_2 [r_1, n_2, r_2]
    \;
    \ldots
    \\
    \tg_{d-1} [r_{d-2}, n_{d-1}, r_{d-1}]
    \;
    \tg_d [r_{d-1}, n_d, 1],
\end{multline}
where $(n_1, n_2, \ldots, n_d)$ is a multi-index ($n_i = 1, 2, \ldots, N_i$ for $i = 1, 2, \ldots, d$), integers $R_{0}, R_{1}, \ldots, R_{d}$ (with convention $R_{0} = R_{d} = 1$) are named TT-ranks, and three-dimensional tensors $\tg_i \in \set{R}^{R_{i-1} \times N_i \times R_i}$ ($i = 1, 2, \ldots, d$) are named TT-cores.
The TT-decomposition~\eqref{eq:tt-repr-tns} allows to represent a tensor or a discretized multivariable function in a compact and descriptive low-parameter form, which is linear in dimension~$d$ (see illustration on Figure \ref{fig:tt-element}), i.\,e., it has less than $d \cdot \max_{i=1,\ldots,d}(N_i R_i^2)$ parameters.

Many useful algorithms (e.\,g., addition, multiplication, solution of linear systems, convolution, integration, etc.) have corresponding equivalents in the TT-format.
The complexity of these algorithms turns out to be polynomial in dimension and mode size if the TT-ranks are bounded.
It makes TT-decomposition extremely popular in a wide range of applications, including computational mathematics and machine learning.

The TT-approximation for a given tensor or discretized multivariable function may be built by efficient numerical methods, e.\,g., TT-SVD~\cite{oseledets2011tensor}, TT-ALS~\cite{chertkov2022anova}, and TT-cross~\cite{oseledets2010ttcross}.
In this work, we do not discuss these methods in detail, but we assume that the TT-approximation is available, and we pose the problem of optimizing the corresponding TT-tensor.
A detailed description of the TT-format and related algorithms are given in works~\cite{oseledets2011tensor, cichocki2016tensor}.
For further presentation, we need only four operations, namely,
element-wise sum ($\func{tt\_add}(\cdot, \cdot)$) and difference ($\func{tt\_dif}(\cdot, \cdot)$),
orthogonalization ($\func{tt\_orth}(\cdot)$),
and explicit construction of the constant tensor ($\func{tt\_const}((N_1, \,N_2, \,\ldots, \,N_d), \,v)$).
We formulate them in the Appendix in the form of corresponding lemmas.


\section{MAIN IDEA}
\label{s:method}

\begin{figure*}[t!]
    \centering
    \includegraphics[scale=0.48]{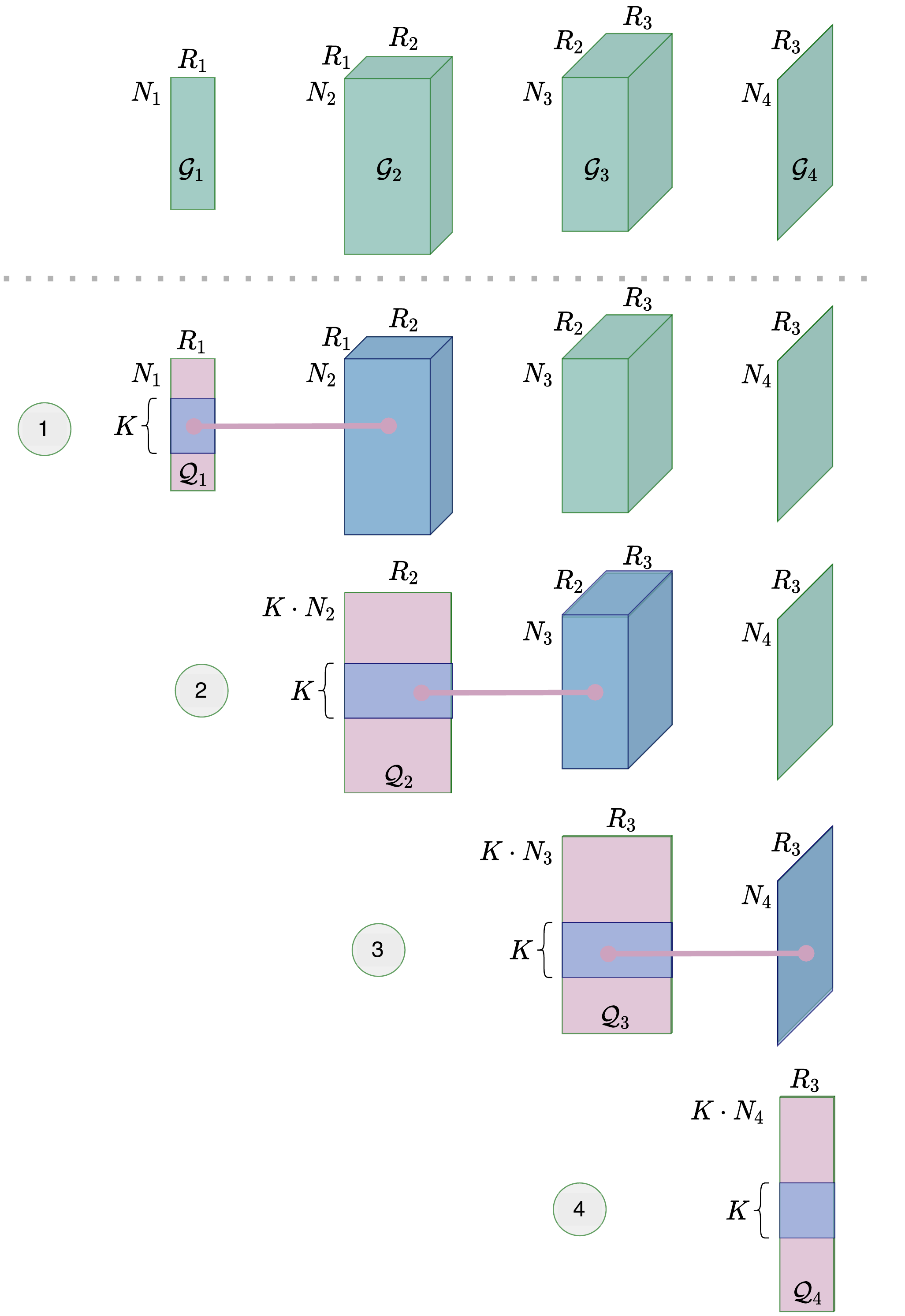}
    \vspace{0.42cm}
    \caption{
        Schematic representation of the proposed approach \func{optima\_tt\_max} for TT-tensor maximization in the $4$-dimensional case.
        For the simplicity of presentation, the rows selected at iterations are drawn as continuous blocks (they are not in practice)
    }
    \label{fig:top_k}
\end{figure*}

The problem considered in this paper
is to find the maximum and minimum elements of the tensor given in the TT-format.
In this section, we provide a motivation
and general description of the proposed method for optimizing TT-tensors as well as the algorithms computation complexities.
The basic idea behind our algorithm is that
we treat the values of 
the tensor under consideration
as values of
probability density function
of some vector random variable~$\vect{\xi} = (\indt\xi_d)$, where $\xi_i\in\{\rangeo N_i\}$.
To avoid negative values, we square the tensor element-wise: 
$
\func{p}(\indt n_d)
= 
C
\left( \ty[\indt n_d] \right)^2
$
for all $n_k = \rangeo N_k$ ($k = \rangeo d$),
where $\func{p}$ is the probability distribution of $\vect{\xi}$ and~$C$ is 
a normalization constant.
In this formulation,
the task of finding the maximum modulo element in the tensor~$\ty$
is equivalent to finding the most probable value of~$\vect{\xi}$.

The algorithm for sampling form the given TT-tensor,
which is treated as a probability distribution, is described in~\cite{dolgov2019hybrid}. 
While the algorithm from this paper is running,
the components of the vector~$\vect\xi$
from the corresponding marginal distributions
are sequentially sampled.
Our idea is to modernise this algorithm
so that at each step we take the most likely~$K$ values.
Thus, our algorithm is deterministic.
The basic idea is that
since we take the most likely components of a random vector
at each step,
we also get a vector
that has a near maximum probability
as a result.


Let us take a closer look at how this algorithm works.
First, consider
the sampling process of a random variable
whose probability density is given by the tensor $\tp$.
Here $\tp=\ty\odot\ty$,
where~$\odot$ denotes element-wise (Hadamard) product of tensors.
The sampling of
the
random variable~$\vect{\xi}$
can be done as a series of consistent samplings of univariate random variables
\begin{equation}\label{eq:one_coordinate_sample}
\xi_i \sim
    \func{p}_i(\xi_i \, | \, \indt\xi_{i - 1}),
\end{equation}
since any random variable can be represented as a product of marginal-conditional probabilities
\begin{multline}\label{eq:marginal}
\func{p}
    (\indt \xi_d) =
    \func{p}_1(\xi_1)
    \func{p}_2(\xi_2 \, | \, \xi_1)\times
    \\
    \func{p}_3(\xi_3 \, | \, \xi_1,\,\xi_2)
    \ldots
    \func{p}_d(\xi_{d} \, | \, \indt\xi_{d-1}).
\end{multline}

If in the $l$-th sampling step
we obtained elements $\{\indt\tilde\xi_{l-1}\}$,
then the marginal distribution function~$\func p_l$
is given by the following expression
\begin{multline}
    \func{p}_l(\xi_l \, | \,
    \indt\tilde\xi_{l-1}
    )
    =
    \sumnNseqSmall  n_l+1,l+2 .. d^N
    \\
    \func p(\indt\overline\xi_{l-1},\,
    \xi_l,\,
    \indft n_l+1,l+2 .. d
    ).
\end{multline}
So, if we can calculate this sums effectively, we can easy sample from the given distribution.
The idea that makes it possible for our method to work effectively is presented in the following theorem.

\begin{theorem}\label{th:orth_property}
    Let $\ty$ be a tensor with the TT-representation given by definition~\eqref{eq:tt-repr-tns}.
    Let the TT-cores of this representation are orthogonalised with use of $\func{tt\_orth}$
    so that all but the first cores meet the relationship~\eqref{eq:tt_orthogonalization}.
    Then the result of the convolution of this tensor with itself 
    by the last $l$ indices ($0<l<d$)
    is given by the following explicit expression
    \begin{multline}
    \sumnNseq n_d-l+1, d-l+2 .. d^N
    \\
    \ty[\indt n_d]\ty[\indt n_d]
    =
    \\
    \bigl\|
    \tg_1 [1, n_1, \alld]
    \;
    \tg_2 [\alld, n_2, \alld]
    \;
    \cdots
    \\
    \tg_{d-l} [\alld, n_{d-l}, \alld] 
    \bigr\|^2_2
    .
    \end{multline}
\end{theorem}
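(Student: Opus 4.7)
The plan is to rewrite the squared convolution as a quadratic form in the first $d-l$ TT-cores, and then collapse the ``tail'' of that quadratic form to an identity matrix using the right-orthogonality hypothesis \eqref{eq:tt_orthogonalization}. To set this up, I would introduce the slice matrices $G_i(n_i) \equiv \tg_i[\alld, n_i, \alld] \in \set{R}^{R_{i-1}\times R_i}$, with the convention $R_0 = R_d = 1$, so that \eqref{eq:tt-repr-tns} reads $\ty[\indt n_d] = A(\indt n_{d-l})\, B(\indft n_{d-l+1}, d-l+2 .. d)$ where
\[
A = G_1(n_1)\cdots G_{d-l}(n_{d-l}) \in \set{R}^{1\times R_{d-l}},
\quad
B = G_{d-l+1}(n_{d-l+1})\cdots G_d(n_d) \in \set{R}^{R_{d-l}\times 1}.
\]
Since $AB$ is a scalar, $(AB)^2 = A\,BB^\top\,A^\top$, and $A$ does not depend on the summation variables, so
\[
\sumnNseq n_d-l+1, d-l+2 .. d^N \ty[\indt n_d]^2 \;=\; A\,\Biggl(\sumnNseq n_d-l+1, d-l+2 .. d^N BB^\top\Biggr)\,A^\top.
\]

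Next I would prove by downward induction on $k = d, d-1, \ldots, d-l+1$ the matrix identity
\[
S_k \equiv \sum_{n_k,\ldots,n_d} G_k(n_k)\cdots G_d(n_d)\,G_d(n_d)^\top\cdots G_k(n_k)^\top \;=\; I_{R_{k-1}}.
\]
For the base case $k=d$, the $(r,r')$ entry of $S_d$ is $\sum_{n_d}\tg_d[r,n_d,1]\,\tg_d[r',n_d,1] = \delta_{r,r'}$ by \eqref{eq:tt_orthogonalization} applied to the last core (recall $R_d = 1$). For the inductive step, the hypothesis $S_{k+1} = I_{R_k}$ reduces $S_k$ to $\sum_{n_k} G_k(n_k)\,G_k(n_k)^\top$, whose $(r,r')$ entry is $\sum_{n_k,r_k}\tg_k[r,n_k,r_k]\,\tg_k[r',n_k,r_k] = \delta_{r,r'}$, again by \eqref{eq:tt_orthogonalization} for core $k$. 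Because the recursion only touches cores of index $k \ge d-l+1 \ge 2$, the assumption covering every core except the first suffices. Taking $k = d-l+1$ shows that the parenthesised sum above equals $I_{R_{d-l}}$, so the whole expression collapses to $A\,A^\top = \|A\|_2^2$, which is exactly the claimed right-hand side.

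The main obstacle is notational bookkeeping: one must pin down the reshape convention in \eqref{eq:tt_orthogonalization} so that it yields precisely the ``$\sum_{n_k} G_k G_k^\top = I_{R_{k-1}}$'' identity needed in the inductive step, rather than its left-orthogonal counterpart, and check that the edge cases $l = 1$ (only the base case is invoked) and $l = d-1$ (the induction reaches core $2$) both remain covered by the hypothesis on cores $2, \ldots, d$. Once that identification is made, each step of the induction is a one-line matrix manipulation.
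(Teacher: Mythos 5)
Your proposal is correct and follows essentially the same route as the paper's proof: both substitute the TT-representation into the convolution and use the right-orthogonality condition~\eqref{eq:tt_orthogonalization} to collapse the sums over $n_d, n_{d-1}, \ldots, n_{d-l+1}$ into identity matrices, leaving the squared norm of the product of the first $d-l$ core slices. Your explicit downward induction on $k$ is just a slightly more formal packaging of the paper's ``peel off the last core, repeat'' argument.
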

The proof of this Theorem is in Appendix.
Thus, the convolution procedure (i.\,e.\ the calculation of marginal probabilities) is reduced to several consecutive matrix-vector products without the explicit summation.
Now, with~$K$ already selected sub-indices at each~$j$-th step, we find $K \cdot N_j$ new probabilities, and from them we choose~$K$ maximum probabilities.

\subsection{Description of the main algorithm}

First, consider the problem of finding the maximum modulo element in the given TT-tensor $\ty \in \set{R}^{N_1 \times N_2 \times \ldots \times N_d}$.
We describe the corresponding method \func{optima\_tt\_max} in Algorithm~\ref{alg:optima_tt_max}, and provide a schematic graphical illustration in Figure~\ref{fig:top_k}.
In the algorithm, we use the function \func{reshape}, which changes the dimension and mode sizes of the passed multidimensional array according to the provided new array shape.
We use the function \func{stack}, which concatenates two matrices $\matr{A}_1 \in \set{R}^{N \times M_1}$ and $\matr{A}_2 \in \set{R}^{N \times M_2}$ into one long matrix $\matr{A} \in \set{R}^{N \times (M_1+M_2)}$.
As a \func{top\_k}, we denote a function that returns a list of $K$ matrix row numbers in descending order of their norms, i.\,e., for the matrix $\matr{A} \in \set{R}^{N_1 \times N_2}$ it returns the vector $\vect{i} \in \set{N}^{K}$, where $\vect{i}[j]$ corresponds to the number of the row that has a $j$-th largest norm ($j = 1, 2, \ldots, K$).
Note that if $K \geq N_1$, then \func{top\_k} will return the numbers of all rows in descending order of norms.
    
The orthogonalization of TT-cores $\tg_i \in \set{R}^{R_{i-1} \times N_i \times R_i}$ ($i = d, d-1, \ldots, 2$) is performed before the main iterations.
As a result, all TT-cores to the right of any $i$-th TT-core ($i = 1, 2, \ldots, d-1$) are orthogonalised.
Then we successively multiply the unfoldings of TT-cores in pairs from left to right and choose $K$ ($K \geq 1$) rows from the result that have the maximum value of the norm.
We separately store the corresponding selected indexes in the set $\matr{I}$,
and when the last TT-core is processed,
the multi-index in the first row of~$\matr{I}$ will correspond to the approximated maximum modulo element of the TT-tensor $\ty$.
Note that the full set of multi-indices $\matr{I}$ corresponds to the approximated ``top-K'' maximum modulo values in the tensor.

The following theorem
provides a rigorous justification
for the correspondence between
the described Algorithm and
the mathematical formulation
of taking the~$K$ most probable components of the vector random variable~$\vect{\xi}$
as previously described (the proof is presented in Appendix).
 
\begin{figure}[ht]
\begin{center}
\begin{algorithm}[H]
\small
\SetAlgoLined
\KwData{
    $d$-dimensional TT-tensor $\ty \in \set{R}^{N_1 \times N_2 \times \ldots \times N_d}$, presented as a list of $d$ TT-cores $\{ \tg_1,\,\tg_2,\,\ldots,\,\tg_d \}$ ($\tg_i \in \set{R}^{R_{i-1} \times N_i \times R_i}$ for $i = 1, 2, \ldots, d$); number of selected candidates for each TT-core $K$.
}
\KwResult{
    multi-index $\vect{i_{|max|}} \in \set{N}^d$ for $\ty$ which relates to the maximum modulo element.
}

$
\func{tt\_orth}(\ty)
$

$
\mq = \tg_1 [1, :, :]
    \in \set{R}^{N_1 \times R_1} \label{alg:line:Q_first}
$

$
\matr{I} = [1, 2, \ldots, N_1]^T
    \in \set{N}^{N_1 \times 1}
$

$
\vect{ind} = \func{top\_k}(\mq, \; K)
    \in \set{N}^{K}
$
 
$
\mq \gets \mq [\vect{ind}, :]
    \in \set{R}^{K \times R_1}
$

$
\matr{I} \gets \matr{I} [\vect{ind}, :]
    \in \set{N}^{K \times 1}
$

\For{$i = 2$ \KwTo $d$}{
    $
    \mg_i = \func{reshape}
        (\tg_i, \; (R_{i-1}, N_i \cdot R_i))
    $

    $
    \mq \gets \mq \mg_i
        \in \set{R}^{K \times N_i \cdot R_i} \label{alg:line:Q_rest}
    $

    $
    \mq \gets \func{reshape}(\mq, \; (K \cdot N_i, R_i))
    $

    $
    \matr{I}_{old} = \matr{I} \otimes \matr{E}_{(N_i)}
        \in \set{N}^{K \cdot N_i \times (i-1)}
    $
    
    //
    $\matr{E}_{(N_i)} \in \set{R}^{N_i \times 1}$
    is a column vector of all ones
    
    $
    \matr{I}_{cur} = \matr{E}_{(K)} \otimes \matr{W}_{(N_i)}
        \in \set{N}^{K \cdot N_i \times 1}
    $
    
    //
    $\matr{W}_{(N_i)} \in \set{R}^{N_i \times 1}$
    is a column vector of $1, 2, \ldots, N_i$
    
    $
    \matr{I} \gets \func{stack} \left(
        \matr{I}_{old},
        \;
        \matr{I}_{cur}
    \right)
        \in \set{N}^{K \cdot N_i \times i}
    $

    $
    \vect{ind} = \func{top\_k}(\mq, \; K)
        \in \set{N}^{K} \label{alg:line:Q_rest_reshape}
    $
    
    $
    \mq \gets \mq [\vect{ind}, :]
        \in \set{R}^{K \times R_i}
    $

    $
    \matr{I} \gets \matr{I} [\vect{ind}, :]
        \in \set{N}^{K \times i}
    $
}

\Return{$\matr{I}[1, :]$}

\caption{Method \func{optima\_tt\_max} for TT-tensor maximization.}
\label{alg:optima_tt_max}
\end{algorithm}
\end{center}
\end{figure}

\begin{figure}[ht]
\begin{center}
\begin{algorithm}[H]
\small
\SetAlgoLined
\KwData{
    $d$-dimensional TT-tensor $\ty \in \set{R}^{N_1 \times N_2 \times \ldots \times N_d}$, presented as a list of $d$ TT-cores $\{ \tg_1,\,\tg_2,\,\ldots,\,\tg_d \}$ ($\tg_i \in \set{R}^{R_{i-1} \times N_i \times R_i}$ for $i = 1, 2, \ldots, d$); number of selected candidates for each TT-core $K$.
}
\KwResult{
    multi-indices $\vect i_{min} \in \set{N}^d$ and $\vect i_{max} \in \set{N}^d$ 
    which relate to the minimum and maximum elements of $\ty$, correspondingly.
}

$
\vect i_{|max|} = 
    \func{optima\_tt\_max} (\ty, \; K)
$

$
y_{|max|} = \ty [\vect i_{|max|}]
$
// Element of the TT-tensor; see~\eqref{eq:tt-repr-tns}

$
\td = \func{tt\_const}(
    \func{shape}(\ty), y_{|max|})
$

$
\tz = \func{tt\_dif}(\ty, \td)
$

$
\vect i_{|min|} =
    \func{optima\_tt\_max} (\tz, \; K)
$

$
y_{|min|} = \ty [\vect i_{|min|}]
$
// Element of the TT-tensor; see~\eqref{eq:tt-repr-tns}

\If{$y_{|max|} \geq y_{|min|}$}{
    \Return{$(\vect i_{|min|}, \; \vect i_{|max|})$}
} \Else {
    \Return{$(\vect i_{|max|}, \; \vect i_{|min|})$}
}

\caption{Method \func{optima\_tt} for TT-tensor optimization.}
\label{alg:optima_tt}
\end{algorithm}
\end{center}
\end{figure}

\begin{theorem}\label{state:probabilistic_interpretation}
For the given tensor~$\ty$ in TT-format,
Algorithm~\ref{alg:optima_tt_max} represents
the implementation of the proposed approach
based on the probabilistic interpretation $\tp=\ty\odot\ty$,
which keeps most likely~$K$ indices of the tensor $\tp$ on each step.
\end{theorem}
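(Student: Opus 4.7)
The plan is to proceed by induction on the loop variable $i$ of Algorithm~\ref{alg:optima_tt_max}, maintaining the invariant that after iteration $i$: (a) the $K$ rows of $\matr{I}$ enumerate the $K$ prefixes $(\xi_1,\ldots,\xi_i)$ chosen greedily to have the largest marginal probability $\func{p}_{1:i}$ among all extensions of the prefixes kept at step $i-1$; and (b) the $k$-th row of $\mq$ equals the partial matrix product $\tg_1[1,\xi_1,\alld]\,\tg_2[\alld,\xi_2,\alld]\cdots\tg_i[\alld,\xi_i,\alld]$ associated with the $k$-th stored prefix.

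The link between the two invariants is supplied by Theorem~\ref{th:orth_property}: after the right-orthogonalisation performed by \func{tt\_orth}, the marginal of $\tp=\ty\odot\ty$ over the last $d-i$ indices satisfies
\begin{equation*}
\func{p}_{1:i}(\xi_1,\ldots,\xi_i)
\;=\; C\,\bigl\|\tg_1[1,\xi_1,\alld]\,\tg_2[\alld,\xi_2,\alld]\cdots\tg_i[\alld,\xi_i,\alld]\bigr\|_2^2,
\end{equation*}
so the marginal probability of any candidate prefix is exactly (up to the constant $C$) the squared Euclidean norm of the corresponding row of $\mq$. Hence \func{top\_k}, which selects rows by norm, selects candidates by marginal probability, which is what the probabilistic interpretation demands.

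Base case ($i=1$): the initial assignment $\mq=\tg_1[1,\alld,\alld]$ produces one row per value of $n_1$, the set $\matr{I}$ lists all $N_1$ singletons, and \func{top\_k} keeps the $K$ of largest marginal $\func{p}_1(\xi_1)$, establishing (a)–(b). Inductive step: assuming (a)–(b) hold after iteration $i-1$, multiplying $\mq$ by the unfolding $\mg_i=\func{reshape}(\tg_i,(R_{i-1},N_i R_i))$ and then reshaping to $K\cdot N_i\times R_i$ produces, row by row, every partial product obtained by appending some $n_i\in\{1,\ldots,N_i\}$ to one of the $K$ stored prefixes. The Kronecker constructions $\matr{I}_{old}=\matr{I}\otimes\matr{E}_{(N_i)}$ and $\matr{I}_{cur}=\matr{E}_{(K)}\otimes\matr{W}_{(N_i)}$ followed by \func{stack} are designed precisely so that the $((k-1)N_i+n_i)$-th row of the enlarged $\matr{I}$ stores the multi-index obtained by appending $n_i$ to the $k$-th old prefix; this is the same row ordering produced by the reshape of $\mq$, so row indices in $\mq$ and $\matr{I}$ remain synchronised. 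Applying \func{top\_k} then keeps the $K$ candidates of largest marginal probability, closing the induction. At $i=d$ the marginal is the full distribution $\func p\propto\ty^2$, and $\matr{I}[1,\alld]$ is therefore the multi-index of the largest $\ty^2[\,\cdot\,]$, i.e.\ the maximum-modulo element.

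The main obstacle, and the part most worth writing out carefully, is the bookkeeping of the reshape/Kronecker-product machinery: one has to verify that the row ordering implicit in $\func{reshape}(\mq\mg_i,(K\cdot N_i,R_i))$ matches the ordering produced by $\matr{I}\otimes\matr{E}_{(N_i)}$ stacked with $\matr{E}_{(K)}\otimes\matr{W}_{(N_i)}$, so that \func{top\_k} on $\mq$ and the corresponding row extraction from $\matr{I}$ refer to the same candidate prefixes. Once this alignment is checked and Theorem~\ref{th:orth_property} is invoked to translate row norms into marginal probabilities, the probabilistic interpretation of the algorithm follows immediately.
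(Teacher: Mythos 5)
Your proof is correct and follows essentially the same route as the paper's: both arguments rest on Theorem~\ref{th:orth_property} to identify the squared row norms of $\mq$ with the (unnormalized) marginals of the candidate prefixes, so that \func{top\_k} selects the $K$ most probable ones, and both track that the index bookkeeping in $\matr{I}$ stays synchronized with the rows of $\mq$. Your version is organized as an explicit induction and spells out the reshape/Kronecker row-ordering check that the paper leaves implicit, while the paper instead phrases the inductive step as constructing the conditional-distribution tensor whose TT-cores remain orthogonalised; these are the same argument.
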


\subsection{Finding both minimum and maximum}

A simple approach for finding both minimum and maximum values of the given TT-tensor can be formulated based on the \func{optima\_tt\_max}.
The proposed optimization method \func{optima\_tt} is presented in Algorithm~\ref{alg:optima_tt}.
First, we find the item with the maximum modulo value $(i_{|max|}, y_{|max|})$ by \func{optima\_tt\_max} method.
Depending on the structure of the tensor, this may be the maximum or minimum element.
Then we subtract the constant TT-tensor $\td$ equal to $y_{|max|}$ from the original tensor $\ty$, i.\,e.,
we calculate the tensor $\tz = \ty \ominus \td$ in the TT-format,
where $\ominus$ denotes element-wise subtraction.
Note that the second extreme value of the tensor $\ty$ is the maximum modulo element of the tensor $\tz$.
Therefore, we can apply the \func{optima\_tt\_max} method to the tensor $\tz$ and obtain the item $(i_{|min|}, y_{|min|})$.
Finally, we determine which of the two obtained items is the minimum and which is the maximum by the comparison of values $y_{|min|}$ and $y_{|max|}$.

\section{COMPLEXITY OF THE ALGORITHM}

We can easily obtain the computational complexity of the proposed optimization approaches \func{optima\_tt\_max} and \func{optima\_tt}.
The corresponding estimate is given in the following theorem.

\begin{theorem} 
The computational complexity of the \func{optima\_tt\_max} method from Algorithm~\ref{alg:optima_tt_max} and of the \func{optima\_tt} method from Algorithm~\ref{alg:optima_tt} is
\begin{equation}\label{eq:optima_tt_complexity}
\order{d \cdot K \cdot N \cdot R^2},
\end{equation}
where $d$ is a tensor dimension, $K$ is a number of selected candidates for each TT-core, $N$ and $R$ are the typical mode size and TT-rank, respectively.
\end{theorem}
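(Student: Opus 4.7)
The plan is to walk through Algorithm~\ref{alg:optima_tt_max} line by line, tally the per-iteration cost, multiply by~$d$, then argue that Algorithm~\ref{alg:optima_tt} only adds operations that are absorbed into the same bound. Throughout I will write $N=\max_i N_i$ and $R=\max_i R_i$ and use standard complexities for the building-block TT operations stated in the Appendix.

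First I would handle the preprocessing: the orthogonalisation $\func{tt\_orth}(\ty)$ consists of $d-1$ QR factorisations of core unfoldings of size $N R \times R$, costing $\order{N R^3}$ per core and $\order{d\, N R^3}$ in total. Under the standing assumption $K\ge R$ used implicitly in the statement (since the algorithm only makes sense when enough candidates are kept), this is dominated by $\order{d\,K N R^2}$; otherwise one would simply retain an additive $\order{d\,N R^3}$ term. The initialisation of $\mq$, $\matr{I}$, and the first $\func{top\_k}$ are clearly $\order{K N R}$, hence also absorbed.

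Next I would bound one pass of the main loop at index~$i$. The matrix $\mq$ has size $K\times R_{i-1}$ and $\mg_i$ (after reshape) has size $R_{i-1}\times N_i R_i$, so the product $\mq\,\mg_i$ costs $\order{K\,R_{i-1}\,N_i\,R_i}=\order{K N R^2}$ and dominates all other work in the body. The Kronecker products that build $\matr{I}_{old}$ and $\matr{I}_{cur}$ and their $\func{stack}$ only touch $\order{K N_i\cdot i}$ integers; the call $\func{top\_k}(\mq,K)$ needs $K N_i$ row norms of length $R_i$, i.e.\ $\order{K N R}$, followed by a partial selection in $\order{K N_i\log K}$; and the final row-slicings are $\order{K R}$ and $\order{K\,i}\le \order{K d}$. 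Summing over $i=2,\dots,d$ gives $\order{d\,K N R^2}$ for the dominant matrix products and lower-order terms for the rest, so the total for Algorithm~\ref{alg:optima_tt_max} is $\order{d\,K N R^2}$.

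For Algorithm~\ref{alg:optima_tt} I would observe that each additional step has cost already absorbed in the same bound: evaluating $\ty[\vect{i}_{|max|}]$ via~\eqref{eq:tt-repr-tns} costs $\order{d R^2}$; the constant tensor $\td=\func{tt\_const}(\cdot,y_{|max|})$ has rank~$1$ and assembly cost $\order{d N}$; the element-wise difference $\func{tt\_dif}(\ty,\td)$ produces a tensor of TT-rank at most $R+1$ in time $\order{d N R}$ (since $\td$ has unit rank, no core of $\ty$ is enlarged by more than one in each rank dimension); finally, the second call $\func{optima\_tt\_max}(\tz,K)$ is again $\order{d K N R^2}$ by the analysis above, with the slight rank increase swallowed by the $R^2$ factor. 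Adding all contributions and comparing the scalars $y_{|min|},y_{|max|}$ keeps the overall complexity at $\order{d K N R^2}$. The only delicate point in the argument is making sure the orthogonalisation cost and the rank-increase in $\tz$ do not poke through the stated bound, which is exactly where the conditions $K\gtrsim R$ and ``$\td$ has rank one'' are invoked.
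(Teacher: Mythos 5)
Your proposal is correct and follows essentially the same route as the paper's own (much terser) proof: the $(d-1)$ products of a $K\times R$ matrix with an $R\times(NR)$ unfolding dominate at $\order{d\,K N R^2}$, the row-norm computations and index bookkeeping are lower order, and the extra operations in Algorithm~\ref{alg:optima_tt} (constant tensor, rank-$(R{+}1)$ difference, second call) are absorbed. You are in fact more careful than the paper, which silently omits the $\order{d\,N R^3}$ orthogonalisation cost that you correctly flag as requiring $K\gtrsim R$ (or an extra additive term) to stay within the stated bound.
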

\begin{proof}
According to Algorithm~\ref{alg:optima_tt_max}, we $(d-1)$ times multiply the matrices of the size $K \times R$ and $R \times (N \cdot R)$, and the related complexity estimate is exactly~\eqref{eq:optima_tt_complexity}.
We also $d$ times calculate the row norms for the $(K \cdot N) \times R$ matrices, and this operation has complexity estimate $\order{d \cdot K \cdot R}$, which is negligible compared to~\eqref{eq:optima_tt_complexity}.
All operations described in Algorithm~\ref{alg:optima_tt} also have low complexity compared to~\eqref{eq:optima_tt_complexity}.
\end{proof}

\section{ACCURACY OF THE ALGORITHM}

The following theorem and its corollaries show what kind of error we get with our method in the worst case.

\begin{theorem}\label{trm:bad_case}
Let $\widetilde{\vect{i}} = [ \indt \widetilde i_d ]$ be the multi-index that was found by approximate search by Algorithm~\ref{alg:optima_tt_max} with~$K = 1$, let $\tp = \ty\odot\ty$ and $\vect{i}^* =[ \indt i^*_d ] = \argmax_{\vect{i}} \tp[\vect i]$.
Then
\begin{equation}\label{eq:bad_case}
\tp[\indt \widetilde i_d] \geq
    \prod_{i=2}^d\frac1{N_i}
        \tp[\indt i^*_d].
\end{equation}
\end{theorem}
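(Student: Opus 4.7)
The plan is to combine the probabilistic reading of Algorithm~\ref{alg:optima_tt_max} (Theorem~\ref{state:probabilistic_interpretation}) with the marginal/norm identity of Theorem~\ref{th:orth_property}, and then invoke a simple ``max $\geq$ average'' bound at each coordinate.

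For $1 \le l \le d$ and a partial index $\vect{j} = (j_1,\ldots,j_l)$, I would introduce the marginal
\[
P_l(j_1,\ldots,j_l) = \sum_{n_{l+1}=1}^{N_{l+1}} \cdots \sum_{n_d=1}^{N_d} \tp[j_1,\ldots,j_l,n_{l+1},\ldots,n_d],
\]
with the convention $P_d(\vect{j}) = \tp[\vect{j}]$. By Theorem~\ref{th:orth_property}, after the orthogonalisation performed at the beginning of Algorithm~\ref{alg:optima_tt_max} the squared norm of the row of $\mq$ indexed by the prefix $(\widetilde i_1,\ldots,\widetilde i_{l-1},i_l)$ equals exactly $P_l(\widetilde i_1,\ldots,\widetilde i_{l-1},i_l)$. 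Consequently, with $K=1$ the \func{top\_k} rule reduces to the greedy marginal maximiser
\[
\widetilde i_l = \argmax_{i_l \in \{1,\ldots,N_l\}} P_l(\widetilde i_1,\ldots,\widetilde i_{l-1},i_l), \qquad l = 1,\ldots,d.
\]

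The key inequality is then immediate. Summing the definition of $P_l$ over $i_l$ gives $P_{l-1}(\vect{j}) = \sum_{i_l=1}^{N_l} P_l(\vect{j}, i_l)$; since $\tp = \ty \odot \ty \ge 0$, the maximum of these $N_l$ nonnegative numbers is at least their average, so
\[
P_l(\widetilde i_1,\ldots,\widetilde i_l) \ge \frac{1}{N_l}\, P_{l-1}(\widetilde i_1,\ldots,\widetilde i_{l-1}), \quad l = 2,\ldots,d.
\]
Iterating this telescope from $l=d$ down to $l=2$ yields $\tp[\widetilde{\vect{i}}] = P_d(\widetilde{\vect{i}}) \ge \bigl(\prod_{l=2}^{d} N_l^{-1}\bigr)\, P_1(\widetilde i_1)$.

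It only remains to bound $P_1(\widetilde i_1)$ from below by $\tp[\vect{i}^*]$. Because $\widetilde i_1$ is the argmax of $P_1$ and all entries of $\tp$ are nonnegative,
\[
P_1(\widetilde i_1) \ge P_1(i_1^*) = \sum_{n_2,\ldots,n_d} \tp[i_1^*,n_2,\ldots,n_d] \ge \tp[i_1^*,\ldots,i_d^*] = \tp[\vect{i}^*],
\]
and substituting this into the previous bound gives~\eqref{eq:bad_case}. The only delicate point is the very first step: identifying the row norms computed by the algorithm with the marginals $P_l$. That identification is precisely the content of Theorem~\ref{th:orth_property}; once this bridge is crossed, the remainder of the argument is nothing more than the ``max $\ge$ average'' inequality applied $d-1$ times, together with the nonnegativity of $\tp$.
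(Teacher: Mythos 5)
Your proposal is correct and follows essentially the same route as the paper's own proof: the paper likewise identifies the greedy choice at each step with maximization of the marginal $\plusl\tp{d-l}$, applies the ``max $\geq$ average'' inequality $d-1$ times to telescope down to the first marginal, and finishes by noting that $\tp[\vect{i}^*]$ is one nonnegative summand of that marginal evaluated at $i_1^*$. Your version is slightly more explicit about the bridge to Theorem~\ref{th:orth_property} (identifying the row norms of $\mq$ with the marginals), which the paper leaves implicit, but the argument is the same.
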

\begin{proof}
Denote the tensor obtained from the given $d$-dimensional tensor $\ty$ by summing over the last $l$ indices ($1 \leq l \leq d$)  by $\plusl\ty l$, i.\,e.,
\begin{equation}
\begin{split}
\plusl \ty l
&
[\indt n_{d-l}] =
    \\
    &
    \sumnNseqSmall n_d-l+1, d-l+2 .. d^N
    \ty[\indt n_d].
\end{split}
\end{equation}
    
In the first iteration, Algorithm~\ref{alg:optima_tt_max} chooses index~$\widetilde{i}_1$, which delivers the maximum of the tensor $\plusl \tp{d-1}$, therefore
\begin{equation}\label{eq:theor_bad:fst}
\plusl \tp{d-1}[\widetilde{i}_1] \geq
    \plusl \tp{d-1}[i^*_1].
\end{equation}
Similarly, on the $j$-th step, Algorithm~\ref{alg:optima_tt_max} chooses index $\widetilde{i}_j$, which delivers the maximum of the tensor $\plusl \tp{d-j}[\indt\widetilde i_{d-1},\,i]$,
considered as a (discrete) function of the last argument~$i$, thus
\begin{multline}\label{eq:ineq_in_main_theorem_proof}
\plusl \tp{d-j}[\indt\widetilde i_{d-j}]
    \geq
    \\
    \geq
    \frac1{N_j}
    \sum_{i=1}^{N_j}
    \plusl \tp{d-j}[\indt\widetilde i_{d-j},\,i]
    =
    \\
    =
    \frac1{N_j}\plusl \tp{d-j+1}[\indt\widetilde i_{d-j}].
\end{multline}
Considering the chain of inequalities derived from the last relation by varying~$j$ from~$1$ to~$d$, and using inequality~\eqref{eq:theor_bad:fst}, we obtain
\begin{multline}
\tp[\indt \widetilde i_d]
    \geq
    \prod_{i=2}^d\frac1{N_i}
    \plusl \tp{d-1}[\widetilde i_1]
    \\
    \geq
    \prod_{i=2}^d\frac1{N_i}
    \plusl \tp{d-1}[i^*_1].
\end{multline}
Finally, note that since all elements of the tensor $\tp{}$ are non-negative, and $\tp[\indt i^*_d]$ is a summand of $\plusl \tp{d-1}[i^*_1]$, we have
\begin{equation}
\plusl \tp{d-1}[i^*_1] \geq
\tp[\indt i^*_d],
\end{equation}
which finishes the proof.
\end{proof}

Note the peculiarity of the estimate given in this Theorem: it does not depend on the size~$N_1$ of the first index.
Thus, if we combine~$j$ first indices into one which is varying from~$1$ to $N_1 N_2 \ldots N_j$, we can noticeably improve this estimate.
Namely, the following corollary holds true.

\begin{corollary}\label{col:join}
In the notation of the Theorem~\ref{trm:bad_case}, let us construct a tensor $\ty_j$ from the tensor $\ty{}$ in which first~$j$ indices are combined:
$
\ty_j[\overline{i_1 \ldots i_j}, \, \indftsmall i_{j+1},{j+2}..d] =
    \ty[\indt i_d].
$
The line above the indices denotes
the sequence number of this multi-index
in the sequence of multi-indexes
ordered lexicographically
(in little-endian convention):
\begin{multline}
\overline{i_1i_2\cdots i_j} =
    i_1
    +(i_2-1)N_1
    +(i_3-1)N_1N_2+\cdots
    \\
    +
    (i_j-1)N_1N_2\cdots N_{j-1}.
\end{multline}
Let Algorithm~\ref{alg:optima_tt_max} been applied to the tensor~$\ty_j$ results in the indices~$\{\overline{\tilde i_1\tilde i_2\cdots\tilde i_j}, \, \indft \tilde i_{j+1},{j+2}..d\}$.
Then the following estimate is true
\begin{equation}\label{eq:collary_index_join}
\tp[\indt \widetilde i_d] \geq
    \prod_{i=j+1}^d\frac1{N_i}
    \tp[\indt i^*_d].
\end{equation}
\end{corollary}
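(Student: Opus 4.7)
The plan is to obtain Corollary~\ref{col:join} as a direct consequence of Theorem~\ref{trm:bad_case}, applied to the reshaped tensor $\ty_j$ rather than to $\ty$ itself. The crucial feature of the estimate~\eqref{eq:bad_case} is that the product starts from $i=2$, so the mode size $N_1$ of the first index does not appear; merging the first $j$ indices hides $N_1,\,\ldots,\,N_j$ inside a single new "first" index and so removes the corresponding factors from the bound.

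First, I would verify that $\ty_j$ admits a TT-representation of the same form as in~\eqref{eq:tt-repr-tns}: its TT-cores are obtained by contracting and reshaping the first $j$ cores $\tg_1,\,\ldots,\,\tg_j$ into a single three-dimensional core with mode size $N_1 N_2 \cdots N_j$ and TT-rank $R_j$ on the right, while the remaining cores $\tg_{j+1},\,\ldots,\,\tg_d$ are left unchanged. Because the reshape $\{i_1,\,\ldots,\,i_j\} \mapsto \overline{i_1 i_2 \cdots i_j}$ is a bijection onto $\{1,\,\ldots,\,N_1 N_2 \cdots N_j\}$, we have the identity $\tp_j[\overline{i_1 \cdots i_j},\,i_{j+1},\,\ldots,\,i_d] = \tp[\indt i_d]$ where $\tp_j = \ty_j \odot \ty_j$, and in particular $\max \tp_j = \max \tp = \tp[\indt i^*_d]$.

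Next, I would apply Theorem~\ref{trm:bad_case} with $K=1$ to the $(d-j+1)$-dimensional TT-tensor $\ty_j$, whose mode sizes are $(N_1 N_2 \cdots N_j,\,N_{j+1},\,\ldots,\,N_d)$. The theorem yields
\begin{equation*}
\tp_j[\overline{\tilde i_1 \cdots \tilde i_j},\,\tilde i_{j+1},\,\ldots,\,\tilde i_d]
\;\geq\;
\prod_{i=j+1}^{d}\frac{1}{N_i}\,\max_{\vect{m}}\,\tp_j[\vect{m}],
\end{equation*}
since the product in~\eqref{eq:bad_case} runs over the mode sizes of indices $2,\,\ldots,\,d-j+1$ of $\ty_j$, which are precisely $N_{j+1},\,\ldots,\,N_d$. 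Translating both sides back through the bijective reshape from the previous step immediately gives~\eqref{eq:collary_index_join}.

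The argument is essentially a reduction, so there is no substantive obstacle: the only point that requires a line of justification is that merging the first $j$ TT-cores produces a bona fide TT-tensor to which Algorithm~\ref{alg:optima_tt_max} can be applied, and that the orthogonalisation, norm computations, and \func{top\_k} selections performed by the algorithm on $\ty_j$ are all well-defined in the reshaped tensor. Once that is observed, Theorem~\ref{trm:bad_case} may be invoked as a black box.
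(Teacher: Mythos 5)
Your proposal is correct and follows essentially the same route as the paper: the corollary is presented there as an immediate consequence of Theorem~\ref{trm:bad_case} applied to the reshaped tensor $\ty_j$, exploiting precisely the observation you make that the bound~\eqref{eq:bad_case} is independent of the first mode size, so merging $i_1,\ldots,i_j$ into one index of size $N_1\cdots N_j$ removes the factors $1/N_2,\ldots,1/N_j$. Your added remark that $\ty_j$ retains a valid TT-representation (by contracting the first $j$ cores) matches the explicit core formula the paper gives right after the corollary.
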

This method 
gives an increase in accuracy
of~$N_2\cdots N_j$ times in the worst-case scenario.

Note that once we have a TT-representation of a tensor, it is easy to construct a TT-representation of the tensor obtained from this one by combining the first~$j$ indices.
The elements of the first core~$\widetilde\tg_1$
of such representation
are written as
\begin{multline}
    \widetilde\tg_1(1, \,
   \overline{i_1i_2\cdots i_j}
   ,\,
    \alpha
    )
=\\
\sumnNseqSmall\alpha_2,3 .. j^r
\tg_1(1,\, i_1, \,\alpha_1)\times\\
\tg_2(\alpha_1,\, i_2, \,\alpha_2)
\cdots
\tg_j(\alpha_{j-1},\, i_j, \,\alpha).
\end{multline}
However, this representation may require high memory consumption.

Now consider the accuracy estimates of Algorithm~\ref{alg:optima_tt_max} for the case where~$K>1$.
Let $K \geq N_1 N_2 \cdots N_j$ for some~$j$, $1<j< d$.
Then on the iteration with number $j+1$, Algorithm~\ref{alg:optima_tt_max} seeks for the maximum~$K$ elements among all combination of the indices $\{\indt i_{j}\}$.
This situation is equivalent to the case where these indices
are combined into a single index.
We have estimation~\eqref{eq:collary_index_join} for such a case.
Thus the following corollary is true.

\begin{corollary}
In the notation of Theorem~\ref{trm:bad_case}, let~$K>1$ and let~$j$ be the index such that
\begin{equation}
\prod_{i=1}^{j-1}{N_i} \leq K < \prod_{i=1}^{j}{N_i}.
\end{equation}
Then the inequality~\eqref{eq:collary_index_join} holds.
\end{corollary}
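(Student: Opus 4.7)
My plan is to reduce the $K>1$ analysis on $\ty$ to the $K=1$ setting of Corollary~\ref{col:join} applied to the index-merged tensor $\ty_j$, as hinted in the paragraph immediately preceding the corollary.

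First I would show that the initial $j-1$ iterations of Algorithm~\ref{alg:optima_tt_max} on $\ty$ are ``saturated'': since $K\geq\prod_{i=1}^{j-1}N_i$, the $\func{top\_k}$ call at each of iterations $1,\ldots,j-1$ operates on a matrix with at most $K$ rows and therefore returns all of them. Hence after iteration $j-1$ the multi-index set $\matr{I}$ contains every tuple $(i_1,\ldots,i_{j-1})$ and the rows of $\mq$ store the corresponding partial products of the first $j-1$ TT-cores. At iteration $j$ the algorithm then forms $\prod_{i=1}^{j}N_i$ candidate rows (a genuine selection, since $K<\prod_{i=1}^{j}N_i$) and keeps the top-$K$. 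The crucial observation is that this operation is identical to the \emph{first} iteration of Algorithm~\ref{alg:optima_tt_max} executed on $\ty_j$ (constructed as in Corollary~\ref{col:join}) with the same parameter $K$: the first TT-core $\widetilde{\tg}_1$ of $\ty_j$ has mode-$1$ size $\prod_{i=1}^{j}N_i$ and encodes by construction exactly the partial products formed at iteration $j$ on $\ty$. Since the remaining TT-cores of $\ty_j$ coincide with $\tg_{j+1},\ldots,\tg_d$, every subsequent iteration on $\ty$ mirrors the corresponding iteration on $\ty_j$, and the returned multi-index $\widetilde{\vect i}$ is identical in the two runs. Invoking Corollary~\ref{col:join} on $\ty_j$ then supplies the bound~\eqref{eq:collary_index_join} for the $\ty$-run.

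The main subtlety I anticipate is that Corollary~\ref{col:join} was stated within the notation of Theorem~\ref{trm:bad_case} (where $K=1$), whereas the equivalence above transports a $K>1$ run on $\ty$ to a $K>1$ run on $\ty_j$. A monotonicity-in-$K$ shortcut is not available, since the output value of Algorithm~\ref{alg:optima_tt_max} is not in general monotone in $K$. One has to verify that the telescoping estimate in the proof of Theorem~\ref{trm:bad_case} carries over to $\ty_j$ for arbitrary $K\geq 1$: although the first iteration on $\ty_j$ always retains the argmax of the first marginal (it lies in the top-$K$ of $\prod_{i=1}^{j}N_i$ candidates) and the size of that first mode never enters the estimate, the intermediate iterations do not obviously preserve the analogous conditional-argmax property. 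The telescoping thus needs a more careful argument showing that the surviving top-$K$ set always contains an index sequence whose value in $\tp$ is at least $\prod_{i=j+1}^{d}\frac{1}{N_i}\,\tp[\indt i^*_d]$.
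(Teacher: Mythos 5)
Your proposal follows essentially the same route as the paper: the paragraph preceding the corollary makes exactly your saturation argument (for $K\geq\prod_{i=1}^{j-1}N_i$ the first iterations retain all prefixes, which is equivalent to merging the first $j$ indices as in Corollary~\ref{col:join}), and the paper's proof then simply asserts that the inequality~\eqref{eq:ineq_in_main_theorem_proof} extends to $K>1$ by a ``slight modification''. The residual subtlety you flag is real but closes immediately: since $\func{top\_k}$ with $K\geq1$ always retains the row of maximal norm, the maximum of $\tp_{\oplus(d-i)}$ over the surviving prefix set decays by at most a factor $1/N_i$ at each subsequent iteration, which is precisely the telescoping needed.
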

\begin{proof}
To complete the proof, we need to verify the inequality similar to~\eqref{eq:ineq_in_main_theorem_proof} for the case $K>1$.
One can easily done this by a slight modification of the reasoning given in the proof of Theorem~\ref{trm:bad_case}.
\end{proof}

Note that in the case where~$K < N_1$, the worst-case algorithm gives the same result as it would if it was run at~$K=1$.
However,
in real numerical experiments with random tensors,
we have observed both an improvement in the result (including the case $K < N_1$) and, in rare cases, a deterioration compared to the case~$K=1$.

\section{NUMERICAL EXPERIMENTS}
\label{s:calc}

{
To check the accuracy and demonstrate the capabilities of the proposed optimization method \func{optima\_tt}, we carried out three series of numerical experiments.
First, we consider the optimization task for various random tensors in the TT-format.
Then we apply our approach for $10$ analytical benchmark functions that are widely used for the evaluation of optimization algorithms.
In both cases, we consider a relatively small dimension ($d \leq 6$) and mode size ($n \leq 20$), which makes it possible to estimate the accuracy of \func{optima\_tt} by comparing the result with the exact value of the optimum obtained by the simple brute-force method.

Then we consider a complex optimization problem for $5$ different $100$-dimensional benchmark functions on a fine grid ($N = 2^{10}$), for which it is possible to explicitly construct the TT-cores.
To check the accuracy of the \func{optima\_tt} in this case, we use the known location of the global minimum for all benchmarks.

We obtained high accuracy of the result in our experiments\footnote{
    In all numerical experiments,
    we choose $K=100$ (number of selected row numbers in descending order of their norm for all unfolding matrices)
    for the reliability and stability of the algorithm.
    The dependence of the accuracy of the result
    on the value of the~$K$
    is illustrated in Figure~\ref{fig:k_dep}.
    
} (in several cases the optimum was found exactly), while the computation time was no more than $40$ seconds for $100$-dimensional functions.
All calculations were carried out on a regular laptop.
}

\subsection{Random TT-tensors of small dimensions}
\label{s:calc_random_small}

\begin{table*}[t!]
\caption{
    The accuracy of the proposed method for random TT-tensors
}
\begin{center}
\begin{tabular}{|p{4.0cm}|p{3.2cm}|p{3.2cm}|p{3.2cm}|}
\hline
\multicolumn{1}{|c|}{Dimension}           &
\multicolumn{1}{ c|}{TT-rank}             &
\multicolumn{1}{ c|}{Error for $y_{min}$} &
\multicolumn{1}{ c|}{Error for $y_{max}$} \\ \hline


\multirow{5}{*}{4}
&    1 & 0        & 0        \\
&    2 & 7.11e-15 & 2.84e-14 \\
&    3 & 1.42e-14 & 1.42e-14 \\
&    4 & 2.84e-14 & 2.84e-14 \\
&    5 & 2.84e-14 & 4.26e-14 \\
\hline

\multirow{5}{*}{5}
&    1 & 0        & 0        \\
&    2 & 8.53e-14 & 5.68e-14 \\
&    3 & 5.68e-14 & 5.68e-14 \\
&    4 & 2.27e-13 & 1.14e-13 \\
&    5 & 1.71e-13 & 1.14e-13 \\
\hline

\multirow{5}{*}{6}
&    1 & 0        & 0        \\
&    2 & 1.14e-13 & 5.68e-14 \\
&    3 & 2.27e-13 & 2.27e-13 \\
&    4 & 4.55e-13 & 6.82e-13 \\
&    5 & 4.55e-13 & 6.82e-13 \\
\hline


\end{tabular}
\end{center}
\label{tab:result_random_small}
\end{table*}

\begin{figure}[t!]
    \centering
    \includegraphics[scale=0.325]{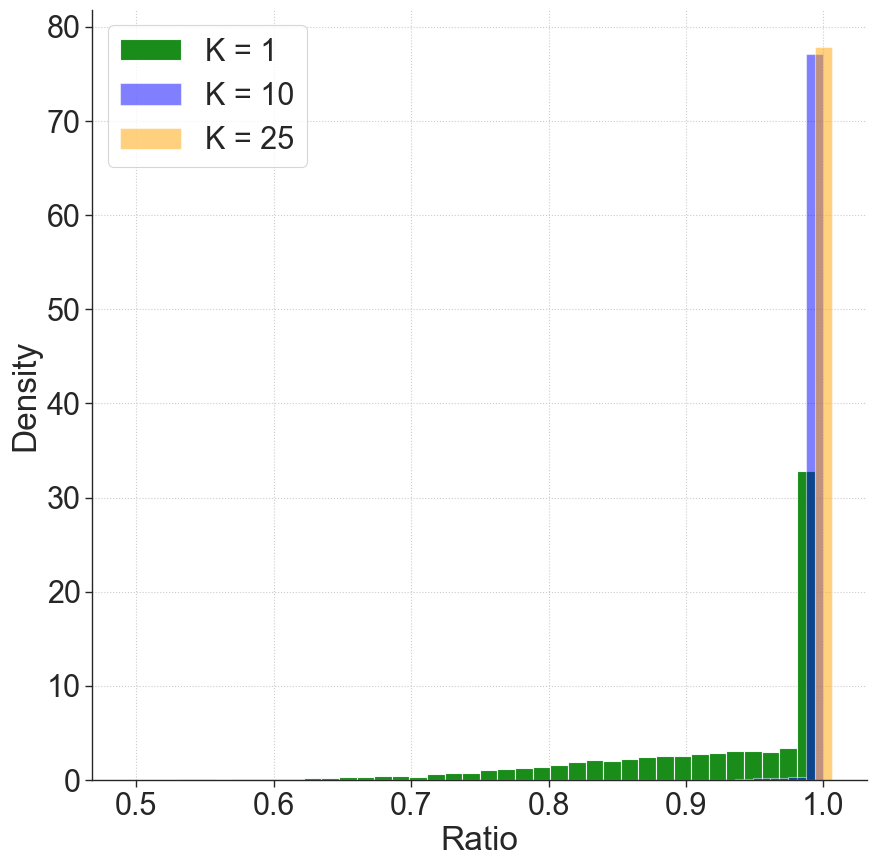}
    \caption{
        Density of the ratio of found value over true maximal element for random TT-tensor
    }
    \label{fig:k_dep}
\end{figure}

For each value of dimension~$d$ and TT-rank~$r$,
we perform the following steps $100$ times and report the maximum absolute error:
\begin{enumerate}
    \item Select different random sizes $N_1, N_2, \ldots, N_d$ ranging from $5$ to $20$ for all modes of $d$-dimensional tensor.
    \item Generate the random $d$-dimensional TT-tensor $\tens{Y} \in \set{R}^{N_1 \times N_2 \times \ldots \times N_d}$ with TT-rank equals $r$ (we use the standard Gaussian distribution to initialize the TT-cores of the random TT-tensor).
    \item Run \func{optima\_tt} algorithm for the TT-tensor $\tens{Y}$ and obtain the approximation to its minimum ($y_{min}$) and maximum ($y_{max}$) values.
    \item Transform the TT-tensor $\tens{Y}$ to the full format and find its exact minimum ($y_{min}^{(real)}$) and maximum ($y_{max}^{(real)}$) values by simple brute-force method.
    \item Calculate the absolute errors
    $
        e_{min} = |y_{min} - y_{min}^{(real)}|
    $
    and
    $
        e_{max} = |y_{max} - y_{max}^{(real)}|
    $
    to check the accuracy of the result.
\end{enumerate}
The computation results for $d = 4, 5, 6$ and $r = 1, 2, 3, 4, 5$ are presented in Table~\ref{tab:result_random_small}.
For all the cases, the maximum absolute error is not higher than $10^{-12}$, while the average time of one run was about $0.02$ sec.

Additionally, we conduct the experiment to evaluate the dependence of the result on the value of the parameter $K$.
We generate $10^4$ random $6$-dimensional TT-tensors with mode size $16$ and TT-rank $3$.
The histogram in Figure~\ref{fig:k_dep} shows the distribution of the ratio of found value over true maximal element, i.\,e., $| \frac{y_{max}^{(real)}}{y_{max}} |$.
We present the distributions for $K = 1, 10, 25$.
Even with $K=1$, we have in most cases a fairly accurate result, however, increasing the parameter value removes the corresponding rare cases of a large error.
    
\subsection{Model functions of small dimensions}
\label{s:calc_function_small}

\begin{table*}[t!]
\caption{
    The accuracy of the proposed method for $6$-dimensional analytic functions
}
\begin{center}
\begin{tabular}{|p{4.0cm}|p{3.2cm}|p{3.2cm}|p{3.2cm}|}
\hline
\multicolumn{1}{|c|}{Function}            &
\multicolumn{1}{ c|}{TT-rank}             &
\multicolumn{1}{ c|}{Error for $y_{min}$} &
\multicolumn{1}{ c|}{Error for $y_{max}$} \\ \hline


Ackley         &  11.6 & 7.11e-15 & 0        \\
\hline

Alpine         &   3.0 & 8.88e-16 & 0        \\
\hline

Dixon          &   5.4 & 2.02e-11 & 0        \\
\hline

Exponential    &   4.8 & 0        & 2.78e-17 \\
\hline

Grienwank      &  11.3 & 1.26e-13 & 0        \\
\hline

Michalewicz    &   3.5 & 1.33e-15 & 5.90e-17 \\
\hline

Qing           &   4.4 & 8.38e-06 & 1.22e-04 \\
\hline

Rastrigin      &   4.1 & 0        & 0        \\
\hline

Schaffer       &  12.0 & 1.78e-15 & 4.44e-16 \\
\hline

Schwefel       &   2.8 & 2.84e-14 & 0        \\
\hline


\end{tabular}
\end{center}
\label{tab:result_function_small}
\end{table*}

In this series of experiments, we consider $10$ popular benchmark functions~\cite{jamil2013literature} for the $6$-dimensional case.
For each benchmark $\func{f}$, we perform the following steps:
\begin{enumerate}
    \item Set the Chebyshev grid with $N_1 = N_2 = \ldots = N_d = 16$ for function discretization.
    \item Build the TT-tensor $\tens{Y} \in \set{R}^{N_1 \times N_2 \times \ldots \times N_d}$, which approximates the discretized function $\func{f}$, by TT-cross method and calculate the average TT-rank
    $r$ of $\tens{Y}$.
\end{enumerate}
Next, we carry out the same steps $3, 4, 5$ as for the case of random TT-tensors.
The computation results are presented in Table~\ref{tab:result_function_small}.
For almost all cases, the absolute error is not higher than $10^{-10}$, while the average time of one run of the optimization algorithm was about $0.2$ sec.

\subsection{Multidimensional model functions}
\label{s:calc_function_big}

\begin{table*}[t!]
\caption{
    The accuracy of the proposed method for $100$-dimensional analytic functions
}
\begin{center}
\begin{tabular}{|p{4.0cm}|p{3.2cm}|p{3.2cm}|p{3.2cm}|}
\hline
\multicolumn{1}{|c|}{Function}            &
\multicolumn{1}{ c|}{TT-rank}             &
\multicolumn{1}{ c|}{Error for $y_{min}$} &
\multicolumn{1}{ c|}{Time, sec.} \\ \hline


Exponential    &   1.0 & 0        &  35.5 \\
\hline

Grienwank      &   3.0 & 4.50e-14 &  41.0 \\
\hline

Qing           &   2.0 & 0        &  37.2 \\
\hline

Rastrigin      &   2.0 & 2.27e-13 &  36.7 \\
\hline

Schwefel       &   2.0 & 0        &  38.0 \\
\hline


\end{tabular}
\end{center}
\label{tab:result_function_big}
\end{table*}

Some of the benchmarks used in the previous experiment have a fairly simple analytic form (despite the very complex structure of the optima), and it is possible to explicitly construct the TT-cores for them.
Therefore, to analyze the performance of the algorithm in the essentially multidimensional case ($d = 100$), we perform the following steps for Exponential, Grienwank, Qing, Rastrigin, and Schwefel function:
\begin{enumerate}
    \item Set the Chebyshev grid with $N_1 = N_2 = \ldots = N_d = 2^{10}$ for function discretization (we select very fine grid to ensure that the minimum of the tensor is close to the real minimum of the function).
    \item Build the TT-tensor $\tens{Y} \in \set{R}^{N_1 \times N_2 \times \ldots \times N_d}$, which approximates the discretized function, using the corresponding explicit representation of its TT-cores and calculate the average TT-rank $r$ of $\tens{Y}$.
    \item Run \func{optima\_tt} algorithm for the TT-tensor $\tens{Y}$ and obtain the approximation to its minimum value ($y_{min}$).
    \item Find the expected minimum value of the TT-tensor $y_{min}^{(tens)} = \tens{Y}[\vect{i}_{min}^{(tens)}]$, where $\vect{i}_{min}^{(tens)}$ is the closest multi-index of the tensor to the known exact global minimum $\vect{x}_{min}^{(real)}$ of the function.
    \item Calculate the absolute error
    $
        e_{min} = |y_{min} - y_{min}^{(tens)}|
    $
    to check the accuracy.
\end{enumerate}
The computation results are presented in Table~\ref{tab:result_function_big}.
For all cases, the absolute error is not higher than $3 \cdot 10^{-13}$, and for three functions, we have obtained the exact value of the optimum, while the average time of one run was about $40$ sec.

\section{RELATED WORK}
\label{s:related}

Recently, TT-approach has been actively used in the direction of multivariable functions optimization, which is close to the TT-tensor optimization problem.
An iterative method based on the maximum volume approach is proposed in~\cite{sozykin2022ttopt, nikitin2022quantum}.
Submatrices of the maximum volume (i.\,e., submatrices having the maximum modulus of the determinant) are computed for successive unfoldings of the tensor, and then the search for the optimum is carried out among the elements of these submatrices.
The authors applied this approach to the problem of optimizing the weights of neural networks in the framework of reinforcement learning problems and to the quadratic unconstrained binary optimization problem.
Similar optimization approach was also considered in~\cite{selvanayagam2022global} with practical applications of the method for optimizing the housings of electronic devices, and in~\cite{shetty2022tensor} for optimizing the movement in space of robotic arms.
We also note work~\cite{soley2021iterative}, which considers an optimization method based on the iterative power algorithm in terms of the quantized version of the TT-decomposition (i.\,e., QTT-decomposition).

\section{DISCUSSION}
The theoretical evaluation of the performance of the \func{tt-optima} given in Theorem~\ref{trm:bad_case} states that the found approximate maximum can be exponentially smaller than the real maximum, and the parameter $K$ only additively reduces the effective dimensionality of the tensor in which the search is performed.
Note that the accuracy estimate given by Theorem~\ref{trm:bad_case} is strict: there are examples where our algorithm returns the value for which inequality~\eqref{eq:bad_case} turns into an equality.
However, in practice, as shown in our experiments, this does not happen and the algorithm works very efficiently, often finding the exact maximum in the problems under study.
In practice, such approaches are widespread in some problems and show their efficiency, such as beamsearch in NLP~\cite{zhu2022leveraging}, so the theoretical formulation of function properties under which the algorithm is efficient is an important future work. 
Thus, we hope that in practical applications our algorithm will be effective.

We can specify several heuristics as ways to increase accuracy.
First, it is possible to combine several first indices, thus reducing the dimensionality, as described in Corollary~\ref{col:join}.

Secondly, we can change the order in which the indices are passed.
The proof of the Theorem~\ref{trm:bad_case} heavily relies on the fact that the tensor dimensions are enumerated in a certain order, but the TT-structure allows us to consider
dimensions in any order, and even to choose the order at runtime.
Exploring possible modifications to the algorithm that would lead to better results in practice and possibly better theoretical estimates is also an important possible extension of the work.
In a practical implementation of the algorithm we make two passes, namely from one end of the TT-tensor and from the other end, and then choose the best one.

Third, it is possible to store as the TT-representation of the tensor~$\ty$ not the function which is to be maximized itself,
but some transformation of it.
In paper~\cite{soley2021iterative} it is proposed to
raise the function to the $p$-th power for some~$p$.
Other transformations can be proposed, that would ``pull out'' the maximum, e.\,g., taking an exponent.
In the course of the algorithm, we have already used natural squaring, as we worked with a tensor~$\tp$ instead of a tensor~$\ty$.
Thus, from the factor in inequality~\eqref{eq:bad_case} for the tensor~$\tp$ we actually have to extract the root to obtain the ratio for the maximum for the desired tensor~$\ty$.
As our future work, we point out the application of efficient algorithms for application of some monotonic transformations to the TT-tensor in conjunction with our algorithm.

In addition, note that increasing the parameter~$K$ in the algorithm, as experiments with random tensors show, does not lead to a significant increase in accuracy.
As can be seen from Fig.~\ref{fig:k_dep}, a value of $K=10$ is already sufficient for acceptable accuracy.

\section{CONCLUSION}
We presented a new method for optimizing TT-tensors with assessment of its complexity and convergence,
and we demonstrated its effectiveness for a number of model problems.
Our method allows a wide range of practical applications
in the field of global optimization
of multivariate functions and multidimensional data arrays.

The main advantage of our method is its high speed: it works in a single pass over the tensor train, performing several matrix-vector multiplications at each step, which can be paralleled.
We emphasize that the complexity of our method grows linearly with the dimensionality~$d$ of the input tensor, so we deal with the curse of dimensionality.



\section{ACKNOWLEDGMENT}
The work was supported by the Ministry of Science and Higher Education of the Russian Federation under grant No. 075-10-2021-068.

\section{APPENDIX}
\label{s:appendix}

\subsection{Basic properties of the TT-decomposition}

We formulate below the operations in the TT-format, which we used in the main text, in the form of corresponding lemmas.

\begin{lemma}\label{lem:tt_add}
Consider two tensors $\ty^{(1)} \in \set{R}^{N_1 \times \ldots \times N_d}$ and $\ty^{(2)} \in \set{R}^{N_1 \times \ldots \times N_d}$ of the same size, represented in the TT-format with TT-cores $\tg_i^{(1)} \in \set{R}^{R_{i-1}^{(1)} \times N_i \times R_i^{(1)}}$ and $\tg_i^{(2)} \in \set{R}^{R_{i-1}^{(2)} \times N_i \times R_i^{(2)}}$ ($i = 1, 2, \ldots, d$), respectively.
Then their element-wise sum $\ty = \ty_1 \oplus \ty_2 \in \set{R}^{N_1 \times \ldots \times N_d}$ can be represented in the TT-format with TT-cores $\tg_i \in \set{R}^{R_{i-1} \times N_i \times R_i}$ ($i = 1, 2, \ldots, d$):
\begin{equation*}
\begin{split}
\tg_1 [1, n_1, :] = & \begin{pmatrix}
    \tg_1^{(1)} [1, n_1, :] &
    \tg_1^{(2)} [1, n_1, :]
\end{pmatrix},
\\
\tg_i [:, n_i, :] = & \begin{pmatrix}
    \tg_i^{(1)} [:, n_i, :] & 0 \\
    0 & \tg_i^{(2)} [:, n_i, :]
\end{pmatrix},
\quad
\\
\tg_d [:, n_d, 1] = & \begin{pmatrix}
    \tg_d^{(1)} [:, n_d, 1] &
    \tg_d^{(2)} [:, n_d, 1]
\end{pmatrix},
\end{split}
\end{equation*}
for $i = 2, 3, \ldots, d-1$ and all $n_j = 1, 2, \ldots, N_j$ ($j = 1, 2, \ldots, d$).
\end{lemma}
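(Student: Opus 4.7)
The plan is to verify the formula by direct evaluation of the TT-representation on the right-hand side and to show that it separates into two independent TT-chains, one reproducing $\ty^{(1)}[n_1,\ldots,n_d]$ and one reproducing $\ty^{(2)}[n_1,\ldots,n_d]$.

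First, I would introduce the new ranks $R_i = R_i^{(1)} + R_i^{(2)}$ for $i = 1, 2, \ldots, d-1$, with $R_0 = R_d = 1$ (so the boundary cores remain row/column concatenations rather than true block-diagonal matrices). Then I would plug the new cores into the defining formula~\eqref{eq:tt-repr-tns}: each matrix slice $\tg_i[\,:,n_i,:\,]$ for $2 \leq i \leq d-1$ is block diagonal, so the matrix product $\tg_1[1,n_1,:]\,\tg_2[:,n_2,:]\cdots \tg_{d-1}[:,n_{d-1},:]\,\tg_d[:,n_d,1]$ decomposes along the two block coordinates. Explicitly, splitting each intermediate summation index $r_i \in \{1,\ldots,R_i\}$ into $r_i \leq R_i^{(1)}$ or $r_i > R_i^{(1)}$ forces all indices to lie in the same block, because the block-diagonal zeros annihilate any mixed products.

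Next, I would treat the first and last cores separately. The horizontal concatenation in $\tg_1[1,n_1,:]$ provides the correct initialization: summands with $r_1 \leq R_1^{(1)}$ pick up $\tg_1^{(1)}[1,n_1,r_1]$, while those with $r_1 > R_1^{(1)}$ pick up $\tg_1^{(2)}[1,n_1,r_1 - R_1^{(1)}]$. Symmetrically, the horizontal concatenation in $\tg_d[:,n_d,1]$ terminates each chain with the appropriate core entry of $\tg_d^{(1)}$ or $\tg_d^{(2)}$. Combined with the block-diagonal property of the middle cores, this shows that the summation over all $(r_1,\ldots,r_{d-1})$ cleanly splits into a sum of two products: one entirely over $\tg_i^{(1)}$-entries with ranks in $\{1,\ldots,R_i^{(1)}\}$, the other entirely over $\tg_i^{(2)}$-entries with (shifted) ranks in $\{1,\ldots,R_i^{(2)}\}$.

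Finally, I would recognize each of the two resulting sums as precisely the TT-formula~\eqref{eq:tt-repr-tns} applied to $\ty^{(1)}$ and $\ty^{(2)}$ respectively, concluding that the new representation evaluates to $\ty^{(1)}[n_1,\ldots,n_d] + \ty^{(2)}[n_1,\ldots,n_d]$ for every multi-index, which is exactly $\ty[n_1,\ldots,n_d]$. No step is genuinely hard; the main obstacle is purely bookkeeping, namely keeping the block indices and their range shifts consistent across the boundary cores where the block-diagonal pattern degenerates into simple horizontal concatenation.
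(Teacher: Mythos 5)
Your proof is correct. The paper itself states Lemma~\ref{lem:tt_add} without proof (it is the standard TT-addition construction from the tensor-train literature), and your direct verification --- substituting the block-diagonal middle cores and concatenated boundary cores into formula~\eqref{eq:tt-repr-tns} and observing that the zero off-diagonal blocks force the rank summation to split into the two original TT-chains --- is exactly the standard argument one would supply. The only cosmetic caveat is that $\tg_d[:,n_d,1]$ is a column vector, so its ``concatenation'' is really a vertical stacking of $\tg_d^{(1)}[:,n_d,1]$ and $\tg_d^{(2)}[:,n_d,1]$; your bookkeeping handles this correctly regardless of the name.
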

We denote this operation as $\func{tt\_add}(\cdot, \cdot)$.
Note that the TT-ranks of the result are $R_0 = 1$, $R_i = R_i^{(1)} + R_i^{(2)}$ ($i = 1, 2, \ldots, d-1$), $R_d = 1$.
The element-wise difference $\ty = \ty_1 \ominus \ty_2 \in \set{R}^{N_1 \times \ldots \times N_d}$ may be computed in the similar way, if the last TT-core in $\ty^{(2)}$ is multiplied by $-1$, i.\,e., $\tg_d^{(2)} \gets (-1) \cdot \tg_d^{(2)}$. 
We denote this operation as $\func{tt\_dif}(\cdot, \cdot)$.


\begin{lemma}\label{lem:tt_const}
A tensor $\ty \in \set{R}^{N_1 \times \ldots \times N_d}$ whose elements are all identically equal to a given number $v$, can be represented in the TT-format with TT-cores of unit rank $\tg_i \in \set{R}^{1 \times N_i \times 1}$ ($i = 1, 2, \ldots, d$) equal to
\begin{equation*}
\begin{split}
\tg_i [1, :, 1] = & \begin{pmatrix}
    \sqrt[\leftroot{1}\uproot{2}d]{|v|} &
    \ldots &
    \sqrt[\leftroot{1}\uproot{2}d]{|v|}
\end{pmatrix},
\quad
\\
\tg_d [1, :, 1] = & \begin{pmatrix}
    \sqrt[\leftroot{1}\uproot{2}d]{|v|} &
    \ldots &
    \sqrt[\leftroot{1}\uproot{2}d]{|v|}
\end{pmatrix} \cdot \func{sign}(v),
\end{split}
\end{equation*}
for $i = 1, 2, \ldots, d-1$ (if $|v| = 0$, then all TT-cores should be identically zero tensors).
\end{lemma}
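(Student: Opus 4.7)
The plan is to verify the claim by direct substitution of the proposed TT-cores into the defining formula~\eqref{eq:tt-repr-tns} and checking that the resulting expression equals $v$ independently of the multi-index $(n_1, n_2, \ldots, n_d)$. Since all cores have shape $1 \times N_i \times 1$, the TT-ranks $R_0, R_1, \ldots, R_d$ are all equal to $1$, so every summation index $r_i$ in~\eqref{eq:tt-repr-tns} ranges over a single value and the sum collapses to a single product of scalars.

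For the case $v \neq 0$, I would observe that the proposed $\tg_i[1, n_i, 1] = \sqrt[d]{|v|}$ for $i = 1, 2, \ldots, d-1$ and $\tg_d[1, n_d, 1] = \sqrt[d]{|v|} \cdot \func{sign}(v)$, regardless of the specific value of $n_i$. Plugging into~\eqref{eq:tt-repr-tns} gives
\begin{equation*}
\ty[n_1, \ldots, n_d] = \prod_{i=1}^{d-1} \sqrt[d]{|v|} \cdot \bigl( \sqrt[d]{|v|} \cdot \func{sign}(v) \bigr) = |v| \cdot \func{sign}(v) = v,
\end{equation*}
which is the desired identity.

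The degenerate case $v = 0$ is handled separately: if every TT-core is taken to be the zero tensor of the appropriate shape, then each summand in~\eqref{eq:tt-repr-tns} contains a factor of $0$, and hence $\ty[n_1, \ldots, n_d] = 0 = v$ for every multi-index. This case must be treated on its own because $\sqrt[d]{|v|}$ cannot carry sign information when $v=0$, and the convention above trivially works.

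There is no substantive obstacle in this proof; the whole argument is a one-line substitution once the rank-one structure is noticed. The only minor care point is the last core carrying $\func{sign}(v)$, which exists precisely so that the product of $d$ positive radicals recovers the correct sign of $v$.
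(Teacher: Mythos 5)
Your verification is correct: with all TT-ranks equal to $1$ the sum in~\eqref{eq:tt-repr-tns} collapses to the product $\bigl(\sqrt[d]{|v|}\bigr)^{d}\cdot\func{sign}(v)=v$, and the $v=0$ case is trivially handled by zero cores. The paper states this lemma without proof, and your direct substitution is exactly the intended (and essentially only) argument.
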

We denote such constant TT-tensor as $\func{tt\_const}\bigl((\indt N_d), \,v\bigr)$.

\begin{lemma}\label{lem:tt_orthogonalize}
Consider a tensor $\ty \in \set{R}^{N_1 \times \ldots \times N_d}$, represented in the TT-format with TT-cores $\tg_i \in \set{R}^{R_{i-1} \times N_i \times R_i}$ ($i = 1, 2, \ldots, d$).
If we apply the orthogonalization operation to its TT-cores according to Algorithm~\ref{alg:orthogonalize},
then the following relations will hold for the updated tensor TT-cores
\begin{equation}\label{eq:tt_orthogonalization}
\sum_{j=1}^{N_i}
    \;
    (\tg_i [:, j, :])
    \;
    (\tg_i [:, j, :])^T
    = \matr{I}_{R_{i-1}},
\end{equation}
for all $i = 2, 3, \ldots, d$, where $\matr{I}_{R_{i-1}}$ is a unit diagonal matrix of the size $R_{i-1} \times R_{i-1}$.
\end{lemma}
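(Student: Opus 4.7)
The plan is to show that a right-to-left sweep (Algorithm~\ref{alg:orthogonalize}) which at each step performs an LQ decomposition of a suitable unfolding of the current core and absorbs the triangular factor into the left neighbour both preserves the tensor $\ty$ and forces~\eqref{eq:tt_orthogonalization} at every core from the second onward.

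First I would reformulate~\eqref{eq:tt_orthogonalization} as a matrix orthogonality statement. Stacking the lateral slices of $\tg_i$ side by side into the unfolding $M_i \in \set{R}^{R_{i-1} \times (N_i R_i)}$ with $M_i = \bigl[\,\tg_i[:,1,:], \; \ldots, \; \tg_i[:,N_i,:]\,\bigr]$, one has $\sum_{j=1}^{N_i} \tg_i[:,j,:]\,(\tg_i[:,j,:])^T = M_i M_i^T$, so~\eqref{eq:tt_orthogonalization} is equivalent to requiring $M_i$ to have orthonormal rows.

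Guided by this equivalence, I would process cores in the order $i = d, d-1, \ldots, 2$. At step $i$: (a) form $M_i$; (b) compute $M_i = L_i Q_i$ with $Q_i \in \set{R}^{R_{i-1} \times (N_i R_i)}$ having orthonormal rows and $L_i \in \set{R}^{R_{i-1} \times R_{i-1}}$ square lower-triangular; (c) replace $\tg_i$ by the reshape of $Q_i$ back to shape $R_{i-1} \times N_i \times R_i$, which makes the new core satisfy~\eqref{eq:tt_orthogonalization} by construction; (d) right-multiply the third mode of $\tg_{i-1}$ by $L_i$, i.e.\ $\tg_{i-1}[:, n_{i-1}, :] \gets \tg_{i-1}[:, n_{i-1}, :]\, L_i$. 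Step (d) compensates step (c) inside the nested sum~\eqref{eq:tt-repr-tns} via $L_i Q_i = M_i$ together with associativity of the matrix products along the TT-chain, so the underlying tensor $\ty$ is unchanged by the step.

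The only point that needs a moment of thought is whether orthogonality established at earlier iterations can be destroyed by later ones. This is immediate from the sweep direction: step $i$ modifies only cores $i$ and $i-1$, so every core positioned to the right of $i$ — already brought into the form~\eqref{eq:tt_orthogonalization} — remains untouched. After the sweep reaches $i = 2$, cores $\tg_2, \ldots, \tg_d$ all satisfy~\eqref{eq:tt_orthogonalization}, which is exactly the claim of the lemma.
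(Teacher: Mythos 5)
The paper states Lemma~\ref{lem:tt_orthogonalize} without proof, and your argument is a correct justification of it that follows Algorithm~\ref{alg:orthogonalize} exactly: the identification $\sum_{j}\tg_i[:,j,:]\,(\tg_i[:,j,:])^T = M_iM_i^T$, the orthonormal-rows factor kept as the new core, the square triangular factor absorbed into the left neighbour (the paper's \func{rq} versus your LQ is purely cosmetic here, since triangularity plays no role), and the right-to-left sweep ensuring previously orthogonalized cores are never revisited. The only implicit assumption worth flagging is $R_{i-1}\le N_iR_i$, without which $Q_i$ cannot have $R_{i-1}$ orthonormal rows; this holds for minimal TT-ranks and is assumed silently by the paper as well.
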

We denote this in-place operation as $\func{tt\_orth}(\cdot)$.

\begin{figure}[t!]
\begin{center}
\begin{algorithm}[H]
\small
\SetAlgoLined
\KwData{
    $d$-dimensional TT-tensor $\ty \in \set{R}^{N_1 \times N_2 \times \ldots \times N_d}$, presented as a list of $d$ TT-cores $\{ \tg_1,\,\tg_2,\,\ldots,\,\tg_d \}$ ($\tg_i \in \set{R}^{R_{i-1} \times N_i \times R_i}$ for $i = 1, 2, \ldots, d$).
}
\KwResult{
    TT-tensor $\ty$ with orthogonalized TT-cores.
}

\For{$i = d$ \KwTo $2$}{
    // Update the $i$-th TT-core:
    
    $
    \mg_i = \func{reshape} \left(
        \tg_i, \; (R_{i-1}, N_i \cdot R_i)
    \right)
    $
    
    $
    \matr{R}, \; \matr{Q} = \func{rq}(\mg_i)
    $
    // Compute the RQ-decomposition

    $
    \tg_i = \func{reshape} \left(
        \matr{Q}, \; (R_{i-1}, N_i, R_i)
    \right)
    $
    
    // Update the $(i-1)$-th TT-core:
    
    $
    \mg_{i-1} = \func{reshape} \left(
        \tg_{i-1}, \; (R_{i-2} \cdot N_{i-1}, R_{i-1})
    \right)
    $
    
    $
    \mg_{i-1} = \mg_{i-1} \matr{R}
    $
    
    $
    \tg_{i-1} = \func{reshape} \left(
        \mg_{i-1}, \; (R_{i-2}, N_{i-1}, R_{i-1})
    \right)
    $
}

\caption{Method \func{tt\_orth} for TT-tensor orthogonalization.}
\label{alg:orthogonalize}
\end{algorithm}
\end{center}
\end{figure}

\subsection{Proof of the Theorem~\ref{th:orth_property}}



Recall that we need to prove the following relationship:
\begin{multline}
\tp_l[\indt n_{d-l}] =
    \sumnNseqSmall n_d-l+1, d-l+2 .. d^N
        \\
        \ty[\indt n_d]\ty[\indt n_d].
\end{multline}
Substituting the representation of the tensor~$\ty$ 
in the TT-format~\eqref{eq:tt-repr-tns} into the last expression, we obtain
\begin{multline}
\tp_l[\indt n_{d-l}] =
\sumnNseqSmall n_d-l+1, d-l+2 .. d-1^N
\\
\tg_1 [1, n_1, \alld]
\;
\tg_2 [\alld, n_2, \alld]
\;
\cdots \times
\tg_{d-1} [\alld, n_{d-1}, \alld]
\\
\raisebox{0pt}[0pt][7ex]{$
\underbrace{
\sum_{n_d}
\tg_d [\alld, n_d, \alld]
\;
\tg_d [\alld, n_d, \alld]^T
}_{I_{R_{d-1}}}
$}
\;
\tg_{d-1} [\alld, n_{d-1}, \alld]^T
\\
\cdots
\tg_2 [\alld, n_2, \alld]^T
\;
\tg_1 [1, n_1, \alld].
\end{multline}
Given the orthogonality of the matrix $\tg_d [\alld, n_d, \alld]$ in the sense of~\eqref{eq:tt_orthogonalization}, we can remove them from the last relation, since they become a unit matrix
\begin{multline}
\tp_l[\indt n_{d-l}] =
\sumnNseqSmall n_d-l+1, d-l+2 .. d-1^N
\\
\tg_1 [1, n_1, \alld]
\;
\tg_2 [\alld, n_2, \alld]
\;
\cdots 
\tg_{d-1} [\alld, n_{d-1}, \alld]
\\
\tg_{d-1} [\alld, n_{d-1}, \alld]^T
\cdots
\tg_2 [\alld, n_2, \alld]^T
\\
\tg_1 [1, n_1, \alld].
\end{multline}
Similarly,
applying the orthogonality condition to matrices 
$\tg_{d-1} [\alld, n_{d-1}, \alld]$, $\tg_{d-2} [\alld, n_{d-2}, \alld]$, $\ldots$, $\tg_{d-l+1} [\alld, n_{d-l+1}, \alld]$, we finally obtain
\begin{multline}\label{eq:orth_convolv}
\tp_l[\indt n_{d-l}] =
\tg_1 [1, n_1, \alld]
\;
\tg_2 [\alld, n_2, \alld]
\\
\cdots
\tg_{d-l} [\alld, n_{d-l}, \alld]
\;
\tg_{d-l} [\alld, n_{d-l}, \alld]^T
\cdots
\\
\tg_2 [\alld, n_2, \alld]^T
\;
\tg_1 [1, n_1, \alld]
=
\\
\bigl\|
\tg_1 [1, n_1, \alld]
\;
\tg_2 [\alld, n_2, \alld]
\;
\cdots
\\
\tg_{d-l} [\alld, n_{d-l}, \alld] 
\bigr\|^2_2.
\end{multline}

Thus, the convenience of representing a TT-tensor in orthogonalized form in the context of our method is that we can simply branch off the TT-cores by the indices of which we convolve the tensor with itself.

\subsection{Proof of the Theorem~\ref{state:probabilistic_interpretation}}

Note that marginal distributions~\eqref{eq:marginal}
are expressed in terms of the overall distribution density function $\func{p}(\indt\xi_d)$ through the sums over the remaining arguments:
\begin{multline}
\func{p}_i(\xi_i \, | \, \indt\xi_{i - 1}) =
\sumnNseqSmall \xi_i+1, i+2 .. d^N
\\
\func{p}(\indt\xi_d),
\end{multline}
for $i=\rangeo d$.
We assume that the variables $\indt\xi_{i-1}$ are already found and fixed, and the variable~$\xi_i$ is to be found at the current step.

Let $C=\plusl\tp d$ be the sum of all elements of tensor $\tp$, i.\,e., normalization constant.
In the line~\ref{alg:line:Q_first} of Algorithm~\ref{alg:optima_tt_max} norms of the rows of the matrix $\mq$ are exactly the marginal distribution of the first coordinate
up to normalization constant
due to the orthoginality condition~\eqref{eq:orth_convolv}
\begin{equation}\label{eq:coonct_mat_and_p}
\normb{\mq[l,\alld]}_2^2 =
    \plusl\tp {(d-1)}[l]
    =
    C\func{p}_1(l).
\end{equation}

Next, if we selected some index $\tilde{\xi_1}$,
we then can directly obtain the TT-cores~$\{\tg'_i\}_{i=1}^{d-1}$ of the $(d-1)$-dimensional TT-tensor~$\tp^{(1)}$,
which corresponds to the conditional distribution $\func p'(\indft \xi_2,3 .. d \,|\, \tilde{\xi_1})$
\begin{multline}
\tp^{(1)}[\indft \xi_2,3..d]
=
\tp[\tilde{\xi_1},\,\indft \xi_2,3..d],\\
\tg'_1[1, n, \alld] = \tg_1[1, \tilde{\xi_1}, \alld] \tg_2[\alld, n, \alld]
\\
\textfor n=\rangeo N_1,
\\
\tg'_i = \tg_{i+1}, \quad \textfor i=\range 2,3,..d-1.
\end{multline}
In the line \ref{alg:line:Q_rest} of Algorithm~\ref{alg:optima_tt_max},
the matrix multiplication corresponding to the construction of the new first TT-core is performed.

Note that the obtained TT-decomposition is also orthogonal, since all but the first TT-cores coincide with those of the orthogonal TT-decomposition of the tensor $\tp$.
Thus, a relation similar to~\eqref{eq:coonct_mat_and_p} is also true for this decomposition, and we can sequentially apply the described procedure.
Namely, in the line~\ref{alg:line:Q_rest_reshape} of Algorithm~\ref{alg:optima_tt_max} squared norm of the rows of the  matrix $\mq$ corresponds to marginal distribution of the $i$-th variable~$\xi_i$, conditioned with the already chosen variables:
\begin{multline}
\normb{\mq[(j-1) K + l,\alld]}_2^2 = 
\\
\plusl\tp {(d-i)}
\bigl[
\range {I[j,\, 1]},I[j,\, 2]..  I[j,\, i - 1]
,\,l
\bigr]=\\
=C'\func p_i(l \,\vert \, 
\range {I[j,\, 1]},I[j,\, 2]..  I[j,\, i - 1]
).
\end{multline}
with corresponding normalization constant~$C'=
\plusl\tp {(d-i+1)}
\bigl[ I[j,\, 1], \,\ldots,\, I[j,\, i - 1] \bigr]
$.
We continue with these iterations until we reach the last core, after which we choose only the single best multi-index.
This completes the proof.

\begin{IEEEbiography}{Andrei Chertkov,}{\,}is a research engineer who develops efficient methods based on the tensor train decomposition for multidimensional data structures.
Contact him at a.chertkov@skoltech.ru.
\end{IEEEbiography}

\begin{IEEEbiography}{Gleb Ryzhakov,}{\,}is a research scientist with interests in computational tensor methods and low-rank tensor approximations.
Contact him at g.ryzhakov@skoltech.ru.
\end{IEEEbiography}

\begin{IEEEbiography}{Georgii Novikov,}{\,}is a research scientist with interests in the field of practical applications of low-rank tensor approximations in problems of machine learning and data analysis.
Contact him at georgii.novikov@skoltech.ru.
\end{IEEEbiography}

\begin{IEEEbiography}{Ivan Oseledets,}{\,}is a head of Center for Artificial Intelligence Technology. Professor Ivan Oseledets conducts research in the field of new methods of low-rank tensor approximations and tensor networks, effective forms of representation and methods of training deep neural networks, and development of applied AI solutions in the fields of computer vision, natural language processing, recommender systems, optimization, etc.
Contact him at i.oseledets@skoltech.ru.
\end{IEEEbiography}

\end{document}